\theoremstyle{plain}
\newtheorem{thm}{Theorem}
\newtheorem{prop}{Proposition}
\newtheorem{por}{Porism}
\theoremstyle{definition}
\newtheorem{defn}{Definition}
\providecommand{\set}[2][]{
  \ifthenelse{\isempty{#1}}{
    \left\{#2\right\}
  }{
    \left\{\,#1\;\middle|\;#2\,\right\}}
  }
\providecommand{\N}{\mathbb{N}} % Natural numbers
\providecommand{\R}{\mathbb{R}} % Real numbers
\DeclareMathOperator{\tops}{Top}
\title{Higher-dimensional book-spaces}
\author[C. Aten]{Charlotte Aten}
\address{Department of Mathematics\\
University of Colorado Boulder\\Boulder 80309\\USA}
\urladdr{\href{https://aten.cool}{https://aten.cool}}
\email{\href{mailto:charlotte.aten@colorado.edu}{charlotte.aten@colorado.edu}}
\subjclass[2020]{06B30, 57Q99}
\keywords{Topological algebra, lattice theory, algebraic theories, geometric combinatorics}
\begin{document}

\begin{abstract}
In 2017, Walter Taylor showed that there exist \(2\)-dimensional simplicial complexes which admit the structure of topological modular lattice but not topological distributive lattice. We give a positive answer to his question as to whether \(n\)-dimensional simplicial complexes with the same property exist. We do this by giving, for each \(n\ge2\), an infinite family of compact simplicial complexes which admit the structure of topological modular lattice but not topological distributive lattice.
\end{abstract}

\maketitle

\section{Introduction}
\label{sec:introduction}

This paper is the result of the author's first meeting with Walter Taylor at the Algebras and Lattices in Hawai'i 2018 conference\cite{alh2018}. At that conference, Taylor discussed his recent work on continuous models of lattice theory whose underlying spaces are simplicial complexes\cite{taylor2017}. He had shown that there exists a family of book-spaces (definition to be recalled below) which are continuous models of the modular law but not the distributive law. These book-spaces are two-dimensional, and he asked whether there exist any higher-dimensional examples of such modular-but-not-distributive topological lattices. We give a positive answer to this question.

A \emph{topological lattice} is a model of lattice theory in the category \(\tops\), in the sense that we may have models of a variety of algebras in any category with finite products\cite{adamek2011}. Equivalently, and more concretely, a topological lattice is a structure \((L,\wedge,\vee,\tau)\) such that
  \begin{enumerate}
    \item \(\tau\) is a topology on \(L\),
    \item \((L,\wedge,\vee)\) is a lattice, and
    \item \(\wedge\) and \(\vee\) are both continuous maps from \(L^2\) to \(L\), where \(L^2\) carries the product topology induced by \(\tau\).
  \end{enumerate}
In addition to Taylor's 2017 work (loc. cit.), Bergman has also obtained a number of results on constructing these objects as well as discussed various generalizations\cite{bergman2017}.

Here our examples will be of a much more restricted class. We will only be considering topological lattices whose underlying spaces arise as the order complexes of finite lattices. Nevertheless, we consider our formal definition to be as given above, for we do not address in this work whether the meet and join operations with which we endow these spaces are in any sense simplicial.

The \emph{order complex} of a lattice \(L\) is the simplicial complex whose simplices are chains in \(L\). We denote the order complex of \(L\) by \(\Delta(L)\). The order complexes of finite posets (including lattices) are frequent objects of study in geometric and topological combinatorics\cite{wachs2007}. We caution the reader that there is a misleading linguistic coincidence in this area. The term ``topological lattice'' is elsewhere used to refer to a structure \((L,\theta,\tau)\) where \(\tau\) is a Hausdorff topology on \(L\), \(\theta\) is a lattice order on \(L\), and \(\theta\) is a closed set in \(L^2\) under the product topology induced by \(\tau\)\cite{zivaljevic1998}. This notion is not equivalent that of the topological lattices of \cite{taylor2017,bergman2017}, even if we require that all spaces and posets under consideration are Hausdorff and lattices, respectively.

Another idea appearing in both the topological algebra setting and the geometric combinatorics setting is geometric realization. The treatments in the above-cited sources all give the same spaces as the geometric realization for the order complex of a lattice, which we now recall.

\begin{defn}[Geometric realization]
\label{defn:geometric_realization_one}
Given a lattice \(L\), a function \(f\colon L\to[0,1]\), and some \(s\in[0,1]\), the \emph{\(s\)-level set} of \(f\) is
  \[
    f_s=\set[x\in L]{f(x)\ge s}.
  \]
We refer to a function \(f\colon L\to[0,1]\) as \emph{\(L\)-admissible} when \(f_s\) is a principal ideal of \(L\) for each \(s\in[0,1]\). We define the \emph{geometric realization} \(\Gamma(L)\) of \(L\) to be the subspace of \([0,1]^L\) consisting of all \(L\)-admissible functions \(f\colon L\to[0,1]\).
\end{defn}

The geometric realization \(\Gamma(L)\) can be endowed with the structure of a topological lattice by giving it the meet and join operations induced by the lattice order relation where \(f\le g\) for \(f,g\in\Gamma(L)\) when \(f(x)\le g(x)\) for all \(x\). In \cite{bergman2017}, a more explicit description of meets and joins in \(\Gamma(L)\) appears in coordinates, but we need not reproduce it here.

This description may feel a bit unsatisfying due to its dissimilarity from the familiar definition of geometric realization for a simplicial complex.

\begin{defn}[Geometric realization]
\label{defn:geometric_realization_two}
Given a lattice \(L\), we write \(\phi_a\colon L\to[0,1]\) to indicate the indicator function of the principal ideal \((a]\subset L\). The \emph{geometric realization} \(\Gamma(L)\) of \(L\) is given by
  \[
    \Gamma(L)=\set[\sum_{a\in C}u_a\phi_a]{C\in\Delta(L)\text{, }(\forall a\in C)(u_a\in[0,1])\text{, and }\sum_{a\in C}u_a=1}.
  \]
\end{defn}

This definition tells us that each point of \(\Gamma(L)\) is explicitly a convex combination of those points \(\phi_a\) corresponding to principal ideas (and hence elements) of \(L\). These two notions of geometric realization agree, but unfortunately computing meets and joins in terms of the convex coordinates \(u_\alpha\) of the above definition is not so easy as in the case of \autoref{defn:geometric_realization_one}.

In the proofs of \autoref{prop:book_spaces_distributivity} and \autoref{thm:modular_not_distributive} we need the fact that \(\Gamma(L)\) is a subdirect power of \(L\) when both are viewed as (discrete) lattices. This is discussed in \cite{bergman2017} and is witnessed by the maps \(h_s\colon\Gamma(L)\to L\) given by \(h_s(f)=\bigvee f_s\).

The remainder of this paper consists of two sections. The first, \autoref{sec:book-spaces}, introduces our higher-dimensional analogues of Taylor's lattices \(M_n\) and book-spaces \(\Gamma(M_n)\). In this section we prove \autoref{por:book_wont_embed}, which is the topological component of the proof of our main theorem. In the final section, \autoref{sec:modular_distributive}, we assemble the information have obtained about our higher-dimensional book-spaces and the discrete book lattices they come from into our \autoref{thm:modular_not_distributive}, which says that the book-spaces \(\Gamma(M_{d,n})\) admit the structure of modular lattice, but that no ordering of \(\Gamma(M_{d,n})\) will endow the space with the structure of a topological distributive lattice when \(n\ge3\).

\section{Book-spaces}
\label{sec:book-spaces}
Our examples of topological lattices which are modular but not distributive generalize the \(2\)-dimensional examples given by Taylor in \cite{taylor2017}. They are obtained by geometrically realizing the following finite lattices.

\begin{defn}[Book lattice]
Given \(d,n\in\N\) with \(d>1\) and \(n\ge1\), the \emph{\((d,n)\)-book lattice} \(M_{d,n}\) has universe
  \[
    M_{d,n}=\set{0,1}\cup\set[a_i]{1\le i\le n}\cup\set[b_j]{1\le j\le d-2}
  \]
and order relation given by setting
  \[
    0<a_i<b_1<\cdots<b_{d-2}<1
  \]
for each \(1\le i\le n\).
\end{defn}

In other words, \(M_{d,n}\) is obtained from a chain of length \(d+1\) by replacing its sole atom with \(n\) atoms, which are the \(a_i\). Taylor's lattices \(M_n\) are our \(M_{2,n}\).

Note that since the maximal chains in \(M_{d,n}\) all have length \(d+1\) we have that \(\Gamma(M_{d,n})\) is a \(d\)-dimensional simplicial complex. By using \autoref{defn:geometric_realization_two} we see that \(\Gamma(M_{d,n})\) consists of a \((d-1)\)-dimensional \emph{spine} corresponding to the chain
  \[
    0<b_1<\cdots<b_{d-2}<1
  \]
which is the common intersection of \(n\) distinct \(d\)-dimensional \emph{pages} corresponding to the maximal chains
  \[
    0<a_i<b_1<\cdots<b_{d-2}<1
  \]
for \(1\le i\le n\). For this reason, we call \(\Gamma(M_{d,n})\) the \emph{\(d\)-dimensional book-space with \(n\)-pages}.

The following geometric proposition will be key to our proof of \autoref{thm:modular_not_distributive}.

\begin{por}
\label{por:book_wont_embed}
When \(n\ge3\) the book-space \(M_{d,n}\) does not embed in \(\R^d\).
\end{por}

\begin{proof}
Suppose towards a contradiction that \(f\colon M_{d,n}\to\R^d\) is an embedding. Let \(X\subset M_{d,n}\) consist of two pages, say those containing \(\phi_{a_1}\) and \(\phi_{a_2}\). We have that \(f(X)\) is a closed ball in \(\R^d\). Consider the point
  \[
    p=\frac{1}{2+d}\left(\phi_0+\phi_1+\sum_{j=1}^d\phi_{b_j}\right)
  \]
in the center of the spine of \(M_{d,n}\). We have that \(f(p)\) belongs to the interior of \(f(X)\), so there is some \(\epsilon>0\) such that the ball \(B_\epsilon(f(p))\) of radius \(\epsilon\) about \(f(p)\) is contained in \(f(X)\).

Consider the sequence of points
  \[
    \set[\frac{1}{n}\phi_{a_3}+\frac{n-1}{n}p]{n>0}
  \]
from another page of \(M_{d,n}\) not belonging to \(X\) which converges to \(p\). The image of this sequence under \(f\) must converge to \(f(p)\), but this means that one of those points must belong to \(B_\epsilon(f(p))\). We have then found a pair of points of \(M_{d,n}\) mapped to the same point of \(\R^d\), contradicting that \(f\) is an embedding.
\end{proof}

In \cite{bergman2017}, Bergman gives an argument due to Taylor for the above proposition in the case \(d=2\) which uses invariance of domain. Our argument is more elementary.

\section{Modularity and distributivity}
\label{sec:modular_distributive}
In order to show that the topological lattices \(\Gamma(M_{d,n})\) are modular but not distributive, we first consider the modularity of the book lattices \(M_{d,n}\).

\begin{prop}
\label{prop:book_lattices_modularity}
The lattices \(M_{d,n}\) are modular.
\end{prop}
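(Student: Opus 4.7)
My plan is to appeal to Dedekind's classical criterion, which states that a lattice is modular if and only if it contains no sublattice isomorphic to the pentagon $N_5$. I would then verify this combinatorial condition directly, exploiting the fact that the comparability structure of $M_{d,n}$ is extremely constrained.

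First I would observe that the only pairs of incomparable elements in $M_{d,n}$ are the pairs $\{a_i,a_j\}$ with $i\neq j$: every other element lies on the chain $0<b_1<\cdots<b_{d-2}<1$, and each atom $a_i$ is comparable with every element of that chain. A copy of $N_5$ requires a three-element chain $u<v<w$ together with a fifth element $t$ satisfying $t\wedge v=u$ and $t\vee v=w$, so in particular $t$ is incomparable to both $v$ and $w$. By the previous observation, any element incomparable to a given element of $M_{d,n}$ must itself be one of the atoms, forcing both $v$ and $w$ to be atoms. Since the atoms are mutually incomparable, this contradicts $v<w$, so no $N_5$ embeds and $M_{d,n}$ is modular.

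A second and perhaps more structural route would be to recognize $M_{d,n}$ as the ordinal sum $M_n\oplus C$, where $M_n=M_{2,n}$ is the well-known modular lattice obtained by placing $n$ incomparable atoms between $0$ and $b_1$, and $C$ is the chain $b_1<b_2<\cdots<b_{d-2}<1$ glued on top. Since both summands are modular and modularity is preserved under ordinal sums, the conclusion follows. I do not anticipate any real obstacle here; the combinatorics of $M_{d,n}$ is rigid enough that either route reduces to a brief case check, and the decomposition observation serves as an independent sanity check.
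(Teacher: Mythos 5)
Your overall strategy coincides with the paper's: both proofs invoke Dedekind's characterization of modularity via the absence of \(N_5\) sublattices, and both rule such a sublattice out by exploiting the fact that the only incomparable pairs in \(M_{d,n}\) are pairs of distinct atoms \(a_i,a_j\). (The paper phrases this as showing that no lattice homomorphism \(h\colon N_5\to M_{d,n}\) can be injective, via a short case analysis on \(h(0),h(a),h(b),h(c)\); your incomparability count reaches the same conclusion a little more directly.) There is, however, one step you should repair. Your description of \(N_5\) as ``a three-element chain \(u<v<w\) together with a fifth element \(t\) satisfying \(t\wedge v=u\) and \(t\vee v=w\)'' names only four elements, and that configuration is merely a four-element Boolean sublattice, which \(M_{d,n}\) does contain (take \(u=0\), \(v=a_1\), \(t=a_2\), \(w=a_1\vee a_2\)); moreover the inference ``so \(t\) is incomparable to both \(v\) and \(w\)'' fails on its face, since \(t\vee v=w\) forces \(t\le w\). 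What you actually need, and what is true, is that \(N_5=\{0,a,b,c,1\}\) contains two \emph{comparable} elements \(b<c\), each incomparable to the third element \(a\); with that corrected statement your argument goes through verbatim, since \(a\), \(b\), and \(c\) would all have to be atoms of \(M_{d,n}\), contradicting \(b<c\). Your second route---viewing \(M_{d,n}\) as \(M_{2,n}\) with the chain through \(b_1,\dots,b_{d-2},1\) glued on top and appealing to preservation of modularity under such ordinal sums (gluing over a single shared element)---is a genuinely different and valid argument not present in the paper; it buys a structural explanation at the cost of needing to cite or briefly verify the preservation fact, whereas the paper's (and your first) argument is self-contained.
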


\begin{proof}
Let \(N_5=\set{0,a,b,c,1}\) be the nonmodular lattice of order \(5\) with maximal chains \(0<a<1\) and \(0<b<c<1\). Suppose that \(h\colon N_5\to M_{d,n}\) is a lattice homomorphism. We will show that \(h\) cannot be injective. Note that \(h\) is certainly not injective if \(h(0)>0\), since the image of \(N_5\) under \(h\) would have to be contained in one of the chains \([a_i)\), but chains are modular. Thus, we take \(h(0)=0\).

We must have that
  \[
    h(a)\wedge h(b)=h(a\wedge b)=h(0)=0,
  \]
so if \(h\) were to be injective we would have to have \(h(a)=a_i\) and \(h(b)=a_j\) for some \(i\neq j\). (Note that if we cannot choose \(i\neq j\) then we are in the case of a chain \(M_{d,1}\), which is modular anyway.) Unless \(h(c)=h(b)\), we must have that \(h(c)>h(a)\). This means that
  \[
    h(a)=h(a)\wedge h(c)=h(a\wedge c)=h(0)=0,
  \]
but we already know that we would need \(h(a)\neq 0\) in order for \(h\) to be injective. We conclude that there is no embedding of \(N_5\) into \(M_{d,n}\).
\end{proof}

As for the distributive law, we have the next proposition.

\begin{prop}
\label{prop:book_lattices_distributivity}
The lattice \(M_{d,n}\) is distributive if and only if \(n\in\set{1,2}\).
\end{prop}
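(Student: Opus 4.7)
The plan is to establish the two directions of the biconditional separately. For the ``only if'' direction I would proceed by contrapositive, exhibiting for each $n \geq 3$ a five-element sublattice of $M_{d,n}$ isomorphic to the diamond $M_3$. For the ``if'' direction I would dispose of the case $n = 1$ by noting that the lattice is a chain, and handle $n = 2$ by recognizing $M_{d,2}$ as an ordinal sum of distributive lattices.

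For $n \geq 3$ I would consider the five-element subset $S = \{0, a_1, a_2, a_3, t\} \subset M_{d,n}$, where $t = b_1$ if $d \geq 3$ and $t = 1$ if $d = 2$. A direct check of the order relation shows that $a_i \wedge a_j = 0$ and $a_i \vee a_j = t$ for all distinct $i, j \in \{1, 2, 3\}$, so $S$ is closed under meet and join and is isomorphic to $M_3$. Since $M_3$ fails the distributive law and distributivity is inherited by sublattices, $M_{d,n}$ is not distributive.

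For $n = 1$, the lattice $M_{d,1}$ is simply the chain $0 < a_1 < b_1 < \cdots < b_{d-2} < 1$, which is trivially distributive. For $n = 2$ I would express $M_{d,2}$ as the ordinal sum of the four-element Boolean algebra $\{0, a_1, a_2, b_1\}$ (with $b_1$ interpreted as $1$ when $d = 2$) and the chain $b_1 < b_2 < \cdots < b_{d-2} < 1$ (which is trivial when $d = 2$). Each summand is a distributive lattice, and the ordinal sum of distributive lattices is itself distributive, so $M_{d,2}$ is distributive as desired.

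There is essentially no obstacle in the argument; it is a short application of standard lattice-theoretic facts. The only bookkeeping point is the uniform treatment of the degenerate case $d = 2$, in which the spine of $M_{d,n}$ collapses and the element $b_1$ must be replaced by $1$ throughout.
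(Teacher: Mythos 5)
Your proof is correct, and it is worth noting that your choice of the top element of the diamond is actually more careful than the paper's. The paper exhibits \(\set{0,a_1,a_2,a_3,1}\) as the copy of \(M_3\) witnessing non-distributivity for \(n\ge3\), but for \(d\ge3\) this set is not closed under join, since \(a_i\vee a_j=b_1\neq1\); your set \(S=\set{0,a_1,a_2,a_3,t}\) with \(t=b_1\) for \(d\ge3\) and \(t=1\) for \(d=2\) is the genuinely correct sublattice, so your ``only if'' direction repairs a small slip in the published argument while following the same basic strategy. Your treatment of \(n=2\) diverges from the paper's: the paper observes that \(M_{d,2}\) has exactly one pair of incomparable elements, so it can contain no copy of \(N_5\) (and, though the paper does not say so explicitly, no copy of \(M_3\) either, which is needed to conclude distributivity), whereas you decompose \(M_{d,2}\) as a vertical gluing of the four-element Boolean algebra \(\set{0,a_1,a_2,b_1}\) with a chain and invoke the standard fact that such an ordinal sum of distributive lattices is distributive. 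Both routes are valid; the paper's incomparability count is slightly more elementary (no appeal to a gluing theorem), while your decomposition makes the structure of \(M_{d,2}\) more transparent and avoids having to check both forbidden sublattices separately.
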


\begin{proof}
Note first that the lattices \(M_{d,1}\) are chains and therefore distributive. In the case of the \((d,2)\)-book lattices, no copy of \(N_5\) could lie inside of \(M_{d,2}\), for \(N_5\) contains two pairs of incomparable elements, but \(M_{d,2}\) has only one pair of incomparable elements. When \(n\ge3\), we have that \(\set{0,a_1,a_2,a_3,1}\) is a subuniverse yielding a copy of \(M_3\) inside \(M_{d,n}\). Thus, \(M_{d,n}\) is not distributive in these cases.
\end{proof}

The observation in the preceding proof has a direct analogue in the continuous case.

\begin{prop}
\label{prop:book_spaces_distributivity}
The lattice \(\Gamma(M_{d,n})\) is distributive if and only if \(n\in\set{1,2}\).
\end{prop}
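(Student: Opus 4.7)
My plan is to use the subdirect representation $\Gamma(M_{d,n})\hookrightarrow M_{d,n}^{[0,1]}$ noted in the introduction, together with \autoref{prop:book_lattices_distributivity} which already classifies distributivity for the discrete book lattices. The representation gives one direction for free, and it also lets me produce a concrete copy of $M_{d,n}$ inside $\Gamma(M_{d,n})$ to handle the other.

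For the ``if'' direction, suppose $n\in\{1,2\}$. Then $M_{d,n}$ is distributive by \autoref{prop:book_lattices_distributivity}. Since distributivity is preserved under direct products and under sublattices, and $\Gamma(M_{d,n})$ is a subdirect power of $M_{d,n}$, the lattice $\Gamma(M_{d,n})$ is itself distributive.

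For the ``only if'' direction, suppose $n\ge 3$. I claim the assignment $a\mapsto\phi_a$ embeds $M_{d,n}$ as a sublattice of $\Gamma(M_{d,n})$. Order-preservation and order-reflection are immediate from $\phi_a\le\phi_b\iff(a]\subseteq(b]\iff a\le b$, and preservation of meet follows from the componentwise identity $\phi_a\wedge\phi_b=\phi_{a\wedge b}$. Preservation of join is the only delicate point, since the join in $\Gamma(L)$ is not the pointwise maximum; I verify it using the separating family $\{h_s\}_{s\in[0,1]}$. A brief calculation shows $h_s(\phi_a)=a$ for $s>0$ and $h_0(\phi_a)=1$, so each $h_s$ sends $\phi_a\vee\phi_b$ and $\phi_{a\vee b}$ to the same element of $M_{d,n}$, forcing $\phi_a\vee\phi_b=\phi_{a\vee b}$. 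Having embedded $M_{d,n}$ as a sublattice of $\Gamma(M_{d,n})$, and knowing by \autoref{prop:book_lattices_distributivity} that $M_{d,n}$ is not distributive when $n\ge 3$, I conclude that $\Gamma(M_{d,n})$ is not distributive either. The only real obstacle here is the join calculation, and the subdirect description collapses it to a routine check.
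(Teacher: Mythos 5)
Your proposal is correct and follows essentially the same route as the paper: the subdirect power representation gives distributivity for \(n\in\set{1,2}\), and the embedding \(a\mapsto\phi_a\) of \(M_{d,n}\) into \(\Gamma(M_{d,n})\) transports the failure of distributivity when \(n\ge3\). The only difference is that you verify in detail (via the maps \(h_s\)) that \(a\mapsto\phi_a\) preserves joins, a fact the paper simply asserts.
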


\begin{proof}
We have that \(\Gamma(M_{d,n})\) is a subdirect power of \(M_{d,n}\), so it follows that \(\Gamma(M_{d,n})\) is distributive if \(M_{d,n}\) is distributive. To see that \(\Gamma(M_{d,n})\) is not distributive when \(n\ge3\), note that we have an embedding of lattices \(h\colon M_{d,n}\to\Gamma(M_{d,n})\) given by \(h(a)=\phi_a\), so our example of a subuniverse yielding a copy of \(M_3\) inside \(M_{d,n}\) from the proof of \autoref{prop:book_lattices_distributivity} extends to \(\Gamma(M_{d,n})\) via this embedding.
\end{proof}

Our main result is not merely that the topological lattices \(\Gamma(M_{d,n})\) are not distributive with respect to the order indicated in the introduction of this paper. Rather, we generalize Taylor's result in \cite{taylor2017} in showing that the spaces \(\Gamma(M_{d,n})\) all admit the structure of topological modular lattice, but not topological distributive lattice. That is, there is no choice of continuous meet and join on \(\Gamma(M_{d,n})\) which will give the space a continuous lattice structure.

\begin{thm}
\label{thm:modular_not_distributive}
The spaces \(\Gamma(M_{d,n})\) admit a structure of modular lattice but not distributive lattice when \(n\ge3\).
\end{thm}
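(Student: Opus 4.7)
The plan is to establish the modular claim constructively and the non-distributive claim by contradiction, using \autoref{por:book_wont_embed} as the geometric obstruction.

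For the modular direction, the idea is to use the default lattice structure on \(\Gamma(M_{d,n})\) described in the introduction. Bergman has shown that the resulting meet and join are continuous, so this already gives a topological lattice structure on \(\Gamma(M_{d,n})\). To see that this lattice is modular, I would invoke the fact recalled at the end of the introduction that \(\Gamma(M_{d,n})\) is a subdirect power of \(M_{d,n}\) via the maps \(h_s\); since \(M_{d,n}\) is modular by \autoref{prop:book_lattices_modularity} and the modular law is an equational identity preserved under subalgebras and products, \(\Gamma(M_{d,n})\) itself satisfies the modular law.

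For the non-distributive direction, I would argue by contradiction. Suppose the underlying space of \(\Gamma(M_{d,n})\) admits some (possibly exotic) continuous operations \(\wedge^*\) and \(\vee^*\) making it a topological distributive lattice. The plan is to invoke a representation theorem for compact topological distributive lattices: any such lattice whose underlying space has topological dimension \(d\) admits a topological embedding into \(\R^d\). For \(d=2\), this is precisely the fact Taylor uses in \cite{taylor2017} via invariance of domain. Combined with \autoref{por:book_wont_embed}, which rules out any topological embedding of the book-space \(\Gamma(M_{d,n})\) into \(\R^d\) when \(n\ge 3\), this produces the desired contradiction.

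The main obstacle in the second part is securing the dimension-controlled embedding theorem for topological distributive lattices. The statement one wants is that a compact, finite-dimensional topological distributive lattice embeds as a topological lattice into a product of real chains whose total factor-count is bounded by its topological dimension. A natural route is a topological refinement of Birkhoff's subdirect representation theorem: first realize the lattice as a closed sublattice of \([0,1]^I\) for some index set \(I\), then use the topological dimension of the underlying space to reduce to \(|I|=d\). Once this embedding theorem is cited or verified in the required generality, the rest of the argument is the immediate contradiction sketched above.
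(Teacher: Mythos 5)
Your proposal is correct and follows essentially the same route as the paper: modularity via \autoref{prop:book_lattices_modularity} together with the subdirect-power representation of \(\Gamma(M_{d,n})\), and non-distributivity by combining \autoref{por:book_wont_embed} with the embedding theorem for distributive topological lattices. The ``main obstacle'' you identify dissolves into a citation: the needed statement is exactly Taylor's theorem, recorded as Proposition 7 of \cite{bergman2017}, that a finite \(d\)-dimensional simplicial complex admitting a distributive topological lattice structure embeds in \(\R^d\) --- which applies verbatim to \(\Gamma(M_{d,n})\), so you need not attempt the (genuinely nontrivial) dimension-reduction argument you sketch via Birkhoff's representation.
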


\begin{proof}
We have already seen that the lattices \(M_{d,n}\) are modular in \autoref{prop:book_lattices_modularity}. Note that since \(\Gamma(L)\) is a subdirect power of \(L\) we have that \(\Gamma(M_{d,n})\) is also modular.

Taylor showed that if a finite simplicial complex \(X\) of dimension \(d\) admits a structure of distributive topological lattice then the space \(X\) embeds in \(\R^d\)\cite[Proposition 7]{bergman2017}. We have already seen in \autoref{por:book_wont_embed} that the spaces \(\Gamma(M_{d,n})\) cannot be embedded in \(\R^d\), however, so we know that they cannot be equipped with a distributive lattice structure.
\end{proof}

Although Taylor gave several homological obstructions to a space continuously modeling various kinds of algebraic structures in \cite{taylor2000}, this kind of result shows that a space modeling a lattice identity is truly a topological, rather than homotopical, phenomenon. All of the book-spaces \(\Gamma(M_{d,n})\) are contractible, but some of them are able to carry the structure of distributive lattice, while others are not.

Our higher-dimensional generalization of Taylor's theorem shows that the dimension of a compact simplicial complex neither obstructs nor guarantees that the complex will admit a distributive lattice structure.

\printbibliography

\end{document}